\newcommand{\Cn}{\mathbb{C}^n}
\newcommand{\Cd}{\mathbb{C}^d}
\newcommand{\Cm}{\mathbb{C}^m}
\newcommand{\C}{\mathbb{C}}
\newcommand{\CN}{\mathbb{C}^N}
\newcommand{\R}{\mathbb{R}}
\newcommand{\N}{\mathbb{N}}
\newcommand{\dopt}[2]{\frac{\partial #1}{\partial #2}}
\newcommand{\fps}[1]{\C[[#1]]}
\newlength{\extendaxesby}\setlength{\extendaxesby}{.4cm}
\DeclareMathOperator{\imag}{Im}
\DeclareMathOperator{\image}{image}
\newtheorem{thm}{Theorem}
\newtheorem{theorem}{Theorem}
\newtheorem{lemma}[thm]{Lemma}
\newtheorem{proposition}[thm]{Proposition}
\theoremstyle{definition}
\newtheorem{definition}[thm]{Definition}
\newtheorem{exa}{Example}
\newtheorem{example}[exa]{Example}
\newtheorem{rem}{Remark}
\newtheorem{remark}[rem]{Remark}
\begin{document}
\title{Formal meromorphic functions on manifolds of finite type}

\author{Robert Juhlin}
\address{Universit\"at Wien, Fakult\"at f\"ur Mathematik, Nordbergstrasse 15, A-1090 Wien, \"Osterreich}
\email{robert.juhlin@univie.ac.at}

\author{Bernhard Lamel}
\address{Universit\"at Wien, Fakult\"at f\"ur Mathematik, Nordbergstrasse 15, A-1090 Wien, \"Osterreich}
\email{bernhard.lamel@univie.ac.at}

\author{Francine Meylan}
\address{University of  Fribourg, Department of Mathematics, CH 1700 Perolles, Fribourg, Suisse}
\email{francine.meylan@unifr.ch}

\dedicatory{Dedicated to Prof. J.J. Kohn on the occasion of his 75th birthday.}

\thanks{The authors were supported by the Austrian Science Fund FWF, project P19667; the third author 
was partially supported by Swiss NSF grant 2100-063464.00/2. This paper was written during 
the third author's visit to Vienna; she wishes to express her gratitude for the hospitality of 
the faculty of mathematics at the University of Vienna.}
\date{\today}

\ifpdf
\DeclareGraphicsExtensions{.pdf, .jpg, .tif}
\else
\DeclareGraphicsExtensions{.eps, .jpg}
\fi

\maketitle

\begin{abstract} It is shown that a real-valued formal meromorphic function on a formal generic 
	submanifold of finite Kohn-Bloom-Graham type is necessarily constant.
\end{abstract}

\section{Introduction}

It is easy to see (and known, see \cite{BER2}) that if $M\subset\CN$ is a connected generic real-analytic CR manifold which
is of finite type in the sense of Kohn
\cite{Ko1} and Bloom-Graham \cite{BG1} at some point $p\in M$, then any meromorphic map
$H\colon U\to \C^m $ defined on a connected neighbourhood of $M$ which satisfies $H(M) \subset E$, where 
$E\subset\C^m$ is a totally real real-analytic submanifold, is necessarily constant.

Let us give  a short proof of this fact. First, we recall the definition of the 
Segre sets $S_p^j$. These are defined inductively. First, we
define the Segre variety $S_p = S_p^1$ for $p\in M$. Let $\rho(Z,\bar Z) = 
(\rho_1(Z,\bar Z), \dots, \rho_d(Z,\bar Z))$ be a (vector-valued) defining function
for $M$ defined in a neighbourhood $U\times \bar U$ of $(p,\bar p)$, i.e.
\[ M\cap U = \{ Z\in U\colon \rho(Z,\bar Z) = 0 \}, \quad d\rho_1\wedge \cdots
d\rho_d \neq 0 \text{ on  U}, \quad \rho(Z,\bar Z) = \bar \rho (\bar Z, Z).\]
With this notation, $S_q^1$ is defined by 
\[ S_q^1 = \{ Z\in U \colon \rho (Z,\bar q) = 0 \}, \quad q\in U, \]
and the $j$-th Segre set $S_p^j$, $j> 1$, is defined inductively by 
\[ S_p^j = \bigcup_{q\in S_p^{j-1}} S_q^1. \]
For consistency, we also put $S_p^0 = \{ p \}$.

We are using the following Theorem, which characterizes finite type in terms of properties of 
the Segre sets:
\begin{theorem}[Baouendi, Ebenfelt and Rothschild \cite{BER2}]\label{thm:mincrit} Let $M\subset \CN$ be a generic real-analytic 
	CR manifold. Then
	$M$ is of finite type at $p\in M$ if and only if there exists an open set $V\subset \CN $
	with $V\subset S_p^{d+1}$.
\end{theorem}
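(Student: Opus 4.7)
The plan is to introduce explicit parametrizations $v^j\colon \C^{jn}\to\CN$ of the Segre sets $S_p^j$ and to reinterpret the rank of $v^j$ at a generic point in terms of iterated Lie brackets of CR vector fields on $M$. Finite type in the sense of Kohn--Bloom--Graham at $p$ means that these brackets together span $T_pM\otimes\C$, while the existence of an open $V\subset S_p^{d+1}$ is equivalent to $v^{d+1}$ being a submersion somewhere. Concretely, I would fix normal coordinates $(z,w)\in\Cn\times\Cd$ centered at $p$ in which $M$ is given by $w=Q(z,\bar z,\bar w)$ with the normalization $Q(z,0,\tau)=Q(0,\chi,\tau)=\tau$. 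Then the Segre variety $S_p^1$ is $\{(z,0)\}$, and the higher Segre sets are parametrized by maps $v^j$ built from alternating compositions of $Q$ and its conjugate applied to independent $\Cn$-valued variables $t_1,\ldots,t_j$.

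For the ``if'' direction, suppose that some nonempty open $V\subset\CN$ is contained in $S_p^{d+1}$. Then $v^{d+1}$ is a submersion on an open subset of $\C^{(d+1)n}$. A chain-rule computation expresses the partial derivatives $\partial v^j/\partial t_i$ at the relevant points in terms of values of iterated Lie brackets of the $(1,0)$- and $(0,1)$-CR vector fields on $M$, so the span of these brackets fills out $T_qM\otimes\C$ at some point $q\in M$ close to $p$. Real-analyticity then propagates this spanning condition from $q$ to $p$, yielding finite type at $p$.

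For the converse, I would establish two key facts about the Segre maps: the rank of $v^j$ is nondecreasing in $j$, and once $\rank v^{j+1}=\rank v^j$ the rank stabilizes for every subsequent $j$. Since $\rank v^1=n$ and the rank is bounded above by $N=n+d$, stabilization must occur no later than $j=d+1$. By the Nagano--Sussmann orbit theorem in its real-analytic form, applied to the distribution generated by the $(1,0)$- and $(0,1)$-CR vector fields, this stabilized rank coincides with the complex dimension of the intrinsic complexification of the local CR orbit of $p$ in $M$. Finite type at $p$ is equivalent to this CR orbit being open in $M$, which forces the stabilized rank to equal $N$ and hence produces an open set inside $S_p^{d+1}$.

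The main obstacle lies in the precise identification of $\image dv^j$ at a regular point with the span of iterated Lie brackets of CR vector fields evaluated along $M$: this requires careful bookkeeping of partial derivatives of the nested composition defining $v^j$ and a clean dictionary between Segre-map-tangent directions and bracket directions. Once this dictionary is in place, the Nagano--Sussmann orbit theorem provides the bridge between the infinitesimal Lie-algebraic formulation of finite type and the geometric statement about openness of $S_p^{d+1}$.
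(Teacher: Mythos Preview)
The paper does not prove Theorem~\ref{thm:mincrit}; it is quoted from \cite{BER2} and used as a black box in the introductory discussion. So there is no ``paper's own proof'' to compare against. That said, your outline is close in spirit to the original Baouendi--Ebenfelt--Rothschild argument, and the ``only if'' half is essentially correct: the rank of the Segre maps $v^j$ is nondecreasing, stabilizes by step $d+1$, and the stabilized rank agrees with the complex dimension of the intrinsic complexification of the local CR orbit of $p$; finite type at $p$ then forces this rank to be $N$.

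There is, however, a genuine gap in your ``if'' direction. From $S_p^{d+1}$ containing an open set you get that $v^{d+1}$ is a submersion \emph{somewhere}, and you then try to read off that the Lie brackets span $T_qM\otimes\C$ at some nearby point $q$ and ``propagate'' this to $p$ by real-analyticity. This propagation fails: finite type at a nearby $q$ does not imply finite type at $p$ (the finite-type locus is open, not closed). The differential of $v^{d+1}$ at a regular point involves bracket-like quantities evaluated along the iterated Segre chain, not at $p$ itself, so you cannot conclude anything about the bracket span at $p$ this way.

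The clean argument is the one you already invoke for the converse: show directly that every $S_p^j$ is contained in the intrinsic complexification of the CR orbit of $p$ (this is the real content of the Segre-set/orbit correspondence). If $M$ is not of finite type at $p$, that orbit has real dimension strictly less than $2n+d$, so its intrinsic complexification is a proper complex-analytic subvariety of $\CN$, and hence no $S_p^j$ can contain an open set. Using the same orbit-theoretic bridge in both directions closes the gap.
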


Now assume that $H\colon U\to\C^m$ is a meromorphic map which satisfies $H(M) \subset E$, 
where  $E$ is totally real. First note that since $M$ is of finite type at some point $p$, it 
is of finite type on the complement of a proper real-analytic subvariety $F\subset M$. So there
exists a point $p\in M$ with the property that $M$ is of finite type at $p$ and $H$ is holomorphic
in some neighbourhood of $p$ (because $M$ is generic, it is a set of uniqueness for 
holomorphic functions). We shall prove that in this situation,
$H$ is constant on an open set in $\CN$, and thus constant.

 We can find coordinates $\eta$ in $\C^m$
such that near $H(p)$, $E$ is given by an equation of the form 
$\eta = \varphi (\bar \eta)$. Thus, $H(Z) = \varphi (\bar{H}(\bar Z))$, whenever 
$Z\in M$, and from this we 
have that $H(Z) =\varphi(\bar{H}(\bar\zeta))$ whenever $Z\in S_\zeta$ (restricting to a suitable 
neighbourhood $U$ of $p$). Thus, $H(Z) = \varphi(\bar{H}(\bar p))$ for $Z\in S_p$; since
$p \in S_p$, $H(Z)= H(p)$ for $Z\in S_p$. Now we consider $Z\in S^2_p$. For each such $Z$, 
there is $\zeta \in S_p^1$ with $Z\in S_\zeta^1$. Our equation tells us that 
$H(Z) = \varphi(\bar H (\bar \zeta)) =\varphi(\bar H (\bar p))$, and again, since $p\in S_p^2$, 
$H(Z) = H(p)$ for $Z\in S^2_p$.

Continuing the iteration process like this, we see that $H(Z) = H(p)$ for $Z\in S^j_p$ for $j\in\N$. Since 
$S_p^{d+1}$ contains an open subset of $\CN$ by Theorem~\ref{thm:mincrit}, the identity principle implies that $H(Z)= H(p)$
on $U$. This proves the constancy of such an $H$. 

Our main point in this paper is the extension of this result to the formal category. Here 
we cannot ``move to a good point''. Let us be a bit more specific about the notions which we 
are going to use (and refer the reader to Baouendi, Mir and Rothschild \cite{BMR1} for more information). 
A {\em generic real formal submanifold} $(M,0) \subset (\CN,0)$ of codimension $d$ is given  by its manifold ideal 
\[ \mathcal{I}(M,0) \subset \fps{Z,\zeta}, \]
which satisfies that $\mathcal{I} (M,0)$ can be generated by $d$ functions $\rho_1,\dots, \rho_d$, where 
$\rho_1,\dots,\rho_d$ have the following properties:
\begin{enumerate}
	\item $\overline{\rho_j} (\zeta, Z) = \rho_j (Z,\zeta)$ (the $\rho_j$ are {\em real})
	\item $\rho_{1;Z} (0) \wedge \dots \wedge \rho_{d;Z} (0) \neq 0$, where  $\rho_{j;Z} = \left(\dopt{\rho_j}{Z_1}, 
	\dots, \dopt{\rho_j}{Z_N}\right)$.
\end{enumerate}
A  {\em formal meromorphic map} $H\colon(\CN,0)\to(\Cm,0)$ is 
given by $H= \frac{N}{D}$, where $ D$ is a
formal power series which is not identically zero, and $N\colon(\CN,0) 
\to (\C^m,0)$ is a formal holomorphic map (i.e., $N=(N_1,\dots, N_m)$ where $N_j \in \fps{Z}$).
 In the present context, $(E,0)\subset (\Cm,0)$ is a {\em formal 
totally real manifold} if it is a formal real submanifold which in suitable (formal) holomorphic coordinates 
$\eta\in \C^m$, $(E,0)$ is given by $\imag \eta = 0$--by this we mean that  $\mathcal{I}(E,0)\subset \fps{\eta,\nu}$ can be 
generated by the functions $\frac{1}{2i} (\nu_j - \eta_j)$. 

\begin{remark}
	Usually, a totally real CR-manifold is defined as a CR-manifold of CR-dimension $0$; what we refer to as ``totally real''
	here is usually referred to as ``maximally totally real''. However, in the formal category, every ``totally real'' submanifold is
	automatically equivalent to a ``maximally totally real'' submanifold; this justifies the chosen terminology.
\end{remark}

\begin{definition}
	We say that a formal meromorphic map $H = N/D\colon (\CN,0) \to (\Cm,0)$  maps $(M,0)\subset (\CN,0)$ into the totally 
	real submanifold $(E,0)\subset (\Cm,0)$ (with coordinates as above)
	if for any formal holomorphic map $\gamma (t) = (\gamma_1 (t), \gamma_2 (t)) \colon (\C^{2N-d}, 0) \to (\C^{2N})$ satisfying 
	$\rho(\gamma_1(t), \gamma_2 (t)) = 0$ for every $\rho \in \mathcal{I}(M,0)$
	it holds that 
	\[ N_j(\gamma_1 (t))\bar D(\gamma_2(t)) - \bar N_j (\gamma_2 (t)) D (\gamma_1 (t)) = 0\]
	for every $j=1,\dots,m$. 
\end{definition}

We also recall that a formal generic manifold $(M,0)\subset (\CN,0)$ is of finite type at $0$ if the 
Lie algebra generated by the formal $(1,0)$- and $(0,1)$-vector fields tangent to $(M,0)$, evaluated at
$0$, spans $\CN$. 
We can now state our main result.

\begin{theorem}\label{thm:main}
	Let $(M,0)\subset (\CN,0)$ be a formal generic manifold of finite type, $H\colon
	(\CN,0) \to (\C^m,0)$ a formal meromorphic  map which maps $(M,0)$ into $(E,0)$, 
	where $(E,0)$ is a formal totally real manifold. Then $H$ is constant.
\end{theorem}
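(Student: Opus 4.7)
The plan is to mirror the Segre-set iteration from the analytic sketch in the introduction, working throughout in formal power series rings. The main tool will be the formal counterpart of Theorem~\ref{thm:mincrit}, which in the framework of \cite{BMR1} reads: for $(M,0)$ of finite type there exist iterated formal Segre mappings $v^j\colon(\C^{j(N-d)},0)\to(\C^N,0)$ with $v^0\equiv 0$ and $v^j|_{t_j=0}=v^{j-1}$, satisfying $\rho(v^j(t),\bar v^{j-1}(\pi_j(t)))=0$ in $\fps{t}$ for a suitable coordinate projection $\pi_j$, and such that $v^{d+1}$ has generic rank $N$. Consequently the pullback $F\mapsto F\circ v^{d+1}$ is an injective $\C$-algebra homomorphism $\fps{Z}\to\fps{t^{(d+1)}}$.

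Choosing coordinates on $\C^m$ so that $(E,0)=\{\imag\eta=0\}$, the assumption $H(M)\subset(E,0)$ becomes, componentwise,
\[ N_\ell(Z)\,\bar D(\zeta)-\bar N_\ell(\zeta)\,D(Z)\in\mathcal{I}(M,0),\qquad \ell=1,\dots,m. \]
Pulling these relations back along the pairs $(Z,\zeta)=(v^j(t),\bar v^{j-1}(\pi_j(t)))$, and writing $A_j^\ell(t):=N_\ell\circ v^j(t)$, $B_j(t):=D\circ v^j(t)$, one obtains the formal functional equations
\[ A_j^\ell(t)\,\overline{B_{j-1}(\pi_j(t))}=\overline{A_{j-1}^\ell(\pi_j(t))}\,B_j(t)\qquad\text{in }\fps{t}. \qquad (\ast) \]

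I would then prove by induction on $j\in\{0,1,\dots,d+1\}$ that $A_j^\ell\equiv 0$ for every $\ell$. The base case is immediate: $A_0^\ell=N_\ell(0)=0$. For the inductive step, assume $A_{j-1}^\ell\equiv 0$. Then the right-hand side of $(\ast)$ vanishes, so in the integral domain $\fps{t}$ either $A_j^\ell\equiv 0$ (and the step is closed) or $B_{j-1}\equiv 0$. The latter alternative is the genuine formal-category difficulty, since in the analytic proof one simply picks a base point where $D$ does not vanish. To dispose of it, one uses that $D\not\equiv 0$ together with injectivity of $v^{(d+1)*}$ to guarantee $B_{d+1}\not\equiv 0$, and then exploits the nesting $v^j|_{t_j=0}=v^{j-1}$ to transfer the vanishing of $A^\ell$ between consecutive Segre levels until reaching a level at which the denominator survives and the domain argument applied to $(\ast)$ forces $A^\ell$ to vanish back down to level $j$.

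Once the induction is complete, $N_\ell\circ v^{d+1}=A_{d+1}^\ell\equiv 0$ for all $\ell$; by injectivity of $v^{(d+1)*}$ this forces $N_\ell\equiv 0$, so $H=N/D\equiv 0$, a constant map. The hardest step is exactly the bootstrap needed to carry the induction through the levels where $B_{j-1}$ vanishes identically --- a phenomenon that does not arise in the analytic setting, where one may translate the base point off the pole set of $H$, but which must be handled head-on in the formal category and is expected to constitute the technical core of the proof.
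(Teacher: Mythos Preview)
Your proposal correctly identifies the overall architecture---pull back the real-valuedness relation along iterated Segre maps and invoke the generic-full-rank criterion for finite type---but there is a genuine gap at exactly the point you flag as ``the technical core.'' In your inductive step, $(\ast)$ yields $A_j^\ell\equiv 0$ from $A_{j-1}^\ell\equiv 0$ only when $B_{j-1}\not\equiv 0$; if $B_{j-1}\equiv 0$ the relation collapses to $0=0$. The bootstrap you sketch cannot close this: the nesting $A_j|_{t_j=0}=A_{j-1}$ propagates vanishing of $A^\ell$ only \emph{downward}, while $(\ast)$ propagates it \emph{upward} only when the denominator survives one level below. Since your only seed is $A_0^\ell=N_\ell(0)=0$, if $B_0=D(0)=0$ (the case of interest for a genuinely meromorphic $H$) you have no mechanism to climb to any higher level, and the induction stalls at the very first step. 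Knowing that $B_{d+1}\not\equiv 0$ is of no use, because nothing in your scheme ever reaches level $d+1$.

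The paper fills this gap with an idea absent from your outline: coprimality of $N$ and $D$ combined with unique factorization in the UFD $\fps{z,\chi,z^1,w}$. One first combines two conjugate instances of the basic relation and cancels a common factor to obtain
\[ D(z^1,w)\,N\bigl(z,Q(z,\chi,\bar Q(\chi,z^1,w))\bigr)=N(z^1,w)\,D\bigl(z,Q(z,\chi,\bar Q(\chi,z^1,w))\bigr). \]
Since $N$ and $D$ share no factor, UFD forces $N(\text{double Segre})=a\cdot N(z^1,w)$ and $D(\text{double Segre})=a\cdot D(z^1,w)$ for one and the same formal series $a$; restricting to the diagonal $z=z^1$ (where the double Segre map is the identity in the transverse variable, $Q(z,\chi,\bar Q(\chi,z,w))=w$) gives $a(z,\chi,z,w)=1$, so $a$ is a \emph{unit}. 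Iterating yields units $a_j$ with $N(v^j(\cdot;w))=a_j\,N(w)$ and $D(v^j(\cdot;w))=a_j\,D(w)$. Now setting $w=0$ and using generic full rank first forces $D(0)\neq 0$ (else $D\equiv 0$), and then $N\equiv 0$. The UFD/coprimality step is precisely the formal-category substitute for ``moving the base point off the pole set,'' and it is what your proposal is missing.
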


We note that the finite type assumption is necessary. Indeed, every manifold of the 
form $M = \tilde M \times E$, where $\tilde M$ is some CR manifold and $E$ is totally real, 
has nonconstant CR maps onto a totally real manifold (the projection onto its second coordinate). 
On the other hand, here is another example, due to J. Lebl:

\begin{example}
  Let $M\subset \C^3$ be given by 
			\[ w_1 = \overline{w_1} e^{i  p |z|^2}, \quad w_2 = \overline{w_2} e^{i q |z|^2}, \]
			for some integers $p$ and $q$.
	Then the function   
	\[ H(z,w_1,w_2) = \frac{w_1^q}{w_2^p} \]
	maps $M$ into $\R$ and is not the restriction of a holomorphic function. Also
	note that  this  function is not even  
	continuous on $M$. Our results imply that {\em no} nonconstant holomorphic choice of projection onto $\R$
	can be made.
\end{example}

\section{Reflection Identities and Consequences}

We shall first show that we can simplify our situation somewhat by choosing "normal" coordinates.
Recall that normal coordinates for a formal generic submanifold $(M,0)\subset (\CN,0)$ means a choice of 
coordinates $(z,w) \in \Cn\times\Cd$ ($d$ being the real codimension of $(M,0)$) together
with formal functions $Q_j(z,\chi,\tau)\in \fps{z,\chi,\tau}$, $j=1,\dots,d$, satisfying 
\[ Q_j (z,0,\tau) = Q_j (0,\chi,\tau) = \tau_j, \quad j=1,\dots,d,\]
such that $w_j - Q_j (z,\chi,\tau)$ generate the manifold ideal associated to $(M,0)$ in 
$\fps{z,w,\chi,\tau}$. We will write $Q=(Q_1,\dots ,Q_d)$, and abbreviate the generating set 
with $w-Q(z,\chi,\tau)$.

We will show that in normal coordinates, a formal meromorphic function $H$ which maps $(M,0)$ into
$(\R,0)$ actually only depends on the transverse variables $w$. To do this, 
we first give a reflection identity which we will use.

\begin{proposition}\label{pro:reflection}
	If $(M,0)\subset (\CN,0)$ is a formal generic submanifold, and $(z,w)$ are normal coordinates for $(M,0)$
	with corresponding generators $w-Q(z,\chi,\tau)$. If $H = \frac{N}{D}\colon (M,0) \to (\R,0)$ is formal meromorphic, and $N$ and $D$ do not have any common factors, then there exists a formal holomorphic function $a(z,\chi,z^1,w)$, with 
	$a(0,0,0,0) = 1$, such that
	\begin{equation}
		\label{e:reflect} 
		\begin{aligned}
			N\left(z,Q\left(z,\chi,\bar{Q} \left( \chi,z^{1},w\right)\right)\right) &= a(z,\chi,z^1,w)
			N \left( z^1,w\right), \\
			D\left(z,Q\left(z,\chi,\bar{Q} \left( \chi,z^{1},w\right)\right)\right) &= a(z,\chi,z^1,w)
			D \left( z^1,w\right).
		\end{aligned}
	\end{equation}
\end{proposition}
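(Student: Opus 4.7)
The plan is to reformulate the hypothesis $H(M,0)\subset(\R,0)$ as a formal-power-series identity on the complexification of $(M,0)$, apply that identity twice, and then use coprimality of $N$ and $D$ to extract the common multiplier $a$. Parametrizing the complexification by $(z,\chi,\tau)\mapsto(z,Q(z,\chi,\tau),\chi,\tau)$ produces a formal curve in the sense of the definition, so the hypothesis that $H$ maps $(M,0)$ into $(\R,0)$ becomes
\[
N(z,Q(z,\chi,\tau))\,\bar D(\chi,\tau)=\bar N(\chi,\tau)\,D(z,Q(z,\chi,\tau))
\]
as an equality in $\fps{z,\chi,\tau}$; call this identity $(\ast)$.

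Next I substitute $\tau=\bar Q(\chi,z^1,w)$ into $(\ast)$. Writing $A=N(z,Q(z,\chi,\bar Q(\chi,z^1,w)))$, $B=D(z,Q(z,\chi,\bar Q(\chi,z^1,w)))$, $C=\bar N(\chi,\bar Q(\chi,z^1,w))$, $E=\bar D(\chi,\bar Q(\chi,z^1,w))$, this yields $AE=CB$ in $\fps{z,\chi,z^1,w}$. In addition, setting $z=z^1$ in the substituted identity and using the normal-coordinate relation $Q(z^1,\chi,\bar Q(\chi,z^1,w))=w$ (which holds because $w-Q(z,\chi,\tau)$ and $\tau-\bar Q(\chi,z,w)$ generate the same manifold ideal) collapses the $N$- and $D$-arguments on both sides to $(z^1,w)$, giving $N(z^1,w)\cdot E=C\cdot D(z^1,w)$. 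Eliminating $C$ between the two identities produces $(A\,D(z^1,w)-B\,N(z^1,w))\cdot E=0$; and since restricting to $z^1=0$ sends $\bar Q(\chi,z^1,w)$ to $w$, we have $E|_{z^1=0}=\bar D(\chi,w)\neq 0$ (as $D\not\equiv 0$), so $E$ is a nonzero element of the integral domain $\fps{z,\chi,z^1,w}$ and I can cancel it to obtain $A\,D(z^1,w)=B\,N(z^1,w)$.

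Finally I extract the factor $a$ and verify its normalization. Since $N$ and $D$ are coprime in $\fps{z^1,w}$, any common factor $h\in\fps{z,\chi,z^1,w}=\fps{z^1,w}[[z,\chi]]$ would have its $(z,\chi)$-constant term $h(0,0,z^1,w)$ a common factor of $N$ and $D$ in $\fps{z^1,w}$, hence a unit; but a power series in $\fps{z^1,w}[[z,\chi]]$ is a unit iff its reduction modulo $(z,\chi)$ is a unit, so $h$ itself is a unit. Thus $N(z^1,w)$ and $D(z^1,w)$ remain coprime in the larger ring, $N(z^1,w)\mid A$, and the unique quotient $a:=A/N(z^1,w)\in\fps{z,\chi,z^1,w}$ satisfies both identities $A=a\,N(z^1,w)$ and $B=a\,D(z^1,w)$. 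To see $a(0,0,0,0)=1$, I set $z=z^1$ and $\chi=0$: the relations $\bar Q(0,z^1,w)=w$ and $Q(z^1,0,w)=w$ collapse $A$ to $N(z^1,w)$, forcing $a(z^1,0,z^1,w)\,N(z^1,w)=N(z^1,w)$; since $\fps{z^1,w}$ is a domain and $N\not\equiv 0$, this gives $a(z^1,0,z^1,w)=1$, and specializing at $z^1=w=0$ yields $a(0,0,0,0)=1$.

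The main subtleties are (i) verifying that $E$ is nonzero as an element of the extended ring, so that the cancellation leading to $A\,D(z^1,w)=B\,N(z^1,w)$ is valid, and (ii) showing that coprimality passes from $\fps{z^1,w}$ to $\fps{z,\chi,z^1,w}$; both are handled cleanly by considering the $(z,\chi)$-adic structure on the larger ring.
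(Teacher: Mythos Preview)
Your proof is correct and follows essentially the same route as the paper's: derive the basic reflection identity on the complexification, substitute $\tau=\bar Q(\chi,z^1,w)$, combine with the specialization $z=z^1$ (which the paper obtains equivalently by conjugating the basic identity) to reach $A\,D(z^1,w)=B\,N(z^1,w)$, and then use coprimality to produce $a$ and the reality relation to normalize it. Your treatment is in fact a bit more careful than the paper's on two points---you justify why $E\neq 0$ before cancelling, and you spell out why coprimality in $\fps{z^1,w}$ persists in $\fps{z,\chi,z^1,w}$---while the paper's normalization step (setting $z=z^1$ with $\chi$ arbitrary, using $Q(z,\chi,\bar Q(\chi,z,w))=w$) yields the slightly stronger $a(z,\chi,z,w)\equiv 1$, though only $a(0,0,0,0)=1$ is needed.
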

\begin{proof} 
 From the definition, we have
\begin{equation}\label{e:pf1}
	\bar{D}(\chi,\tau)  {N} (z,Q(z,\chi,\tau)) = \bar{N} (\chi,\tau) D(z,Q(z,\chi,\tau)).
\end{equation}
Taking the complex conjugate and replacing $z$ by $z^1$ in this equation, we also have that 
\begin{equation}
	\label{e:pf1conj}
	D(z^1,w) \bar{N} (\chi,\bar{Q} (\chi,z^1,w)) = N (z^1,w) \bar{D} (\chi,\bar{Q} (\chi,z^1,w)).
\end{equation}	
We now substitute $ \tau = \bar{Q} (\chi,z^1,w)$ into \eqref{e:pf1} to obtain
\begin{equation}
	\label{e:pf2} 	\bar{D}(\chi,\bar{Q} (\chi,z^1,w))  {N} (z,Q(z,\chi,\bar{Q} (\chi,z^1,w))) = \bar{N} (\chi,\bar{Q} (\chi,z^1,w)) D(z,Q(z,\chi,\bar{Q} (\chi,z^1,w))).
\end{equation}
We now multiply the left (and right, respectively) hand sides of \eqref{e:pf1conj} and \eqref{e:pf2} with each other, 
and after cancelling a common factor of $ \bar{N} (\chi,\bar{Q} (\chi,z^1,w))\bar{D} (\chi,\bar{Q} (\chi,z^1,w))$
we obtain
\begin{equation}
	\label{e:pffinal} D(z^1,w)  {N} (z,Q(z,\chi,\bar{Q} (\chi,z^1,w))) = D(z,Q(z,\chi,\bar{Q} (\chi,z^1,w)))
	N(z^1,w).
\end{equation}
Now, using the fact that $N$ and $D$ do not have any common factors,  unique factorization in the 
ring $\fps{z,\chi,z^1,w}$ implies that there
exists a unit $a(z,\chi,z^1,w)$ such that \eqref{e:reflect} holds. By evaluating \eqref{e:reflect} at
$z= z^1$, and using the reality property $Q(z,\chi,\bar{Q} (\chi,z,w)) = w$, we have 
that $a(z,\chi,z,w) = 1$, so in particular, $a(0,0,0,0) = 1$.
\end{proof}

\begin{lemma}\label{lem:transverse}
	Let $(M,0)\subset (\CN,0)$ be a formal generic submanifold. Assume that $H(Z) = \frac{N(Z)}{D(Z)}$ is 
	a formal meromorphic map sending $(M,0)$ into $(\R,0)$. Then for any choice of normal coordinates $(z,w)$ for 
	$(M,0)$, we have that $H(z,w) = H(0,w)$; in particular, there exist formal functions $\tilde N (w)$ and $\tilde D (w)$ 
	such that $H(z,w) = \frac{\tilde N (w)}{\tilde D (w)}$.
\end{lemma}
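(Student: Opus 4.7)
The plan is to exploit the reflection identity from Proposition~\ref{pro:reflection} by specializing the auxiliary variable $\chi$ to $0$. Recall the normality conditions $Q(z,0,\tau)=\tau$ and $Q(0,\chi,\tau)=\tau$, which by conjugation give $\bar Q(\chi,0,w)=w$ and $\bar Q(0,z^1,w)=w$. Setting $\chi=0$ in the expression $Q(z,\chi,\bar Q(\chi,z^1,w))$, I first get $\bar Q(0,z^1,w)=w$ and then $Q(z,0,w)=w$, so the left-hand arguments of $N$ and $D$ in \eqref{e:reflect} collapse to just $(z,w)$.

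Therefore, writing $b(z,z^1,w):=a(z,0,z^1,w)$, the reflection identity specializes to
\begin{equation*}
	N(z,w) = b(z,z^1,w)\, N(z^1,w), \qquad D(z,w) = b(z,z^1,w)\, D(z^1,w).
\end{equation*}
Now I would set $z^1=0$, obtaining $N(z,w)=b(z,0,w)\,N(0,w)$ and $D(z,w)=b(z,0,w)\,D(0,w)$. Since $b(0,0,0)=a(0,0,0,0)=1$, the formal series $b(z,0,w)$ is a unit in $\fps{z,w}$. In particular, because $D(z,w)\not\equiv 0$, we conclude $D(0,w)\not\equiv 0$, so the formal meromorphic function $\tilde N(w)/\tilde D(w):=N(0,w)/D(0,w)$ is well defined.

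Finally, dividing the two identities and cancelling the unit $b(z,0,w)$ gives
\begin{equation*}
	H(z,w) = \frac{N(z,w)}{D(z,w)} = \frac{N(0,w)}{D(0,w)} = \frac{\tilde N(w)}{\tilde D(w)},
\end{equation*}
which is exactly the conclusion. The only place requiring any care is the initial substitution $\chi=0$: one must verify that plugging formal series into formal series is well-defined here, which it is because $\bar Q(\chi,z^1,w)$ has no constant term in $(z^1,w)$, so $Q(z,\chi,\bar Q(\chi,z^1,w))$ makes sense as a formal series in $(z,\chi,z^1,w)$ and the substitution $\chi=0$ is a legitimate operation in $\fps{z,z^1,w}$. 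No other obstacle arises; the proof is essentially immediate once the reflection identity of Proposition~\ref{pro:reflection} is in place.
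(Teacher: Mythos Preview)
Your proof is correct and follows essentially the same route as the paper: the paper simply sets $\chi=z^1=0$ in \eqref{e:reflect} at once to obtain $N(z,w)=a(z,0,0,w)N(0,w)$ and $D(z,w)=a(z,0,0,w)D(0,w)$, whereas you carry out the same substitution in two steps with more detailed justification of the normality collapse. The extra care you take in verifying that the composition and substitution are well-defined is fine but not strictly necessary here.
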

\begin{proof}
We use Proposition~\ref{pro:reflection}. Setting $\chi = z^1 = 0$, we see that
\[ N(z,w) = a(z,0,0,w) N(0,w), \quad D(z,w) = a(z,0,0,w) D(0,w). \]
The Lemma follows. 	
\end{proof}

\section{Prolongation of the reflection along Segre maps and proof of Theorem~\ref{thm:main}}

We will denote by 
\[ v^{1} (z,\chi,z^1;w) = Q(z,\chi,\bar{Q}(\chi,z^1,w)); \]
in the usual Segre-map terminology, $v^1(z,\chi,z^1;0)$ is the transversal component of the
second Segre map of $(M,0)$. Since we shall only have use for the Segre-maps of even order, we introduce 
the notation adapted to our setting. We define $S^{(0)} = z$, and for $j\geq 1$
\[ S^{(j)} = (z,\chi,z^1,\chi^1,\dots,z^j), \]
and write $S^{(j)}_k = (z^k,\chi^k,\dots,z^j)$ for $k\leq j$. By Lemma~\ref{lem:transverse}, $H$ does not depend on $z$ and 
we can 
assume that 
\[ H(z,w)=\frac{N(w)}{D(w)}. \] With that notation and our
simplification from Lemma~\ref{lem:transverse}, 
our reflection identity \eqref{e:reflect} now reads 
\begin{equation}
	\label{e:reflect2} 
	\begin{aligned}
		N\left(v^1 (S^{(1)};w)\right) &= a(S^{(1)},w)
		N \left(w\right), \\
		D\left(v^1 (S^{(1)};w)\right) &= a(S^{(1)},w)
		D \left( w\right).
	\end{aligned}
\end{equation}
For $j\geq 1$, we define inductively
\[ v^{j} \left( S^{(j)} ; w\right) = v^1 (z,\chi,z^1; v^{j-1} (S^{(j)}_1 ;w)). \]

We can now state the finite type criterion of Baouendi, Ebenfelt and Rothschild for {\em formal} submanifolds \cite{BER6}, 
for later  reference, 
as follows:

\begin{theorem}\label{thm:fintypecritformal} If $(M,0)$ is of finite type in the sense of 
	Kohn-Bloom-Graham, 
	then there exists a
	$j\geq 1$ such that 
	\[ S^{(j)} \mapsto v^{j} \left(S^{(j)};0\right), \quad (\C^{(2j-1)n},0) \to (\C^d,0),\]
	is of generic full rank $d$. 
\end{theorem}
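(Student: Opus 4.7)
The plan is to adapt the Baouendi--Ebenfelt--Rothschild proof of the corresponding criterion for convergent submanifolds (as in their book) to the formal category; the key algebraic input, identifying the Taylor expansion of $v^j$ at the origin with iterated Lie brackets of tangent CR vector fields at $0$, involves only formal differentiation and so transfers without change to formal power series.

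First I would work in normal coordinates $(z,w)\in\Cn\times\Cd$ with defining functions $w-Q(z,\chi,\tau)$ satisfying $Q(z,0,\tau)=Q(0,\chi,\tau)=\tau$, and write down in the standard way a basis of formal $(1,0)$- and $(0,1)$-vector fields $L_\alpha,\bar L_\alpha$ tangent to $(M,0)$. Normality forces $L_\alpha(0)=\partial/\partial z_\alpha$ and $\bar L_\alpha(0)=\partial/\partial \chi_\alpha$, so these values span precisely the CR directions at $0$; the transverse $w$-directions can enter the Lie algebra at $0$ only through iterated brackets.

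Next I would analyse the Taylor expansion of $v^j(S^{(j)};0)$ at the origin. The recursion $v^j(S^{(j)};w)=v^1(z,\chi,z^1;v^{j-1}(S^{(j)}_1;w))$ together with normality of $Q$ implies that $v^j(S^{(j)};0)$ vanishes whenever any one of the iteration variables is set to zero, so its lowest-order nonvanishing term is multilinear in those variables. An inductive Faà di Bruno calculation (the formal analogue of the BER computation) identifies the coefficient of each such multilinear monomial with the $w$-component at $0$ of an iterated Lie bracket of $L_\alpha,\bar L_\beta$ of the corresponding pattern, and every iterated bracket of the relevant length arises in this way. Hence the span of the multilinear Taylor coefficients of $v^j(\cdot;0)$ equals the $w$-projection at $0$ of the span of all iterated brackets up to a length growing with $j$.

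By the finite type hypothesis, the Lie algebra generated by the $L_\alpha,\bar L_\beta$ spans $\CN$ at $0$; since the $L_\alpha(0),\bar L_\beta(0)$ already span the CR directions, the $w$-components of iterated brackets at $0$ span all of $\Cd$. Therefore for some $j$ the multilinear Taylor coefficients of $v^j(\cdot;0)$ span $\Cd$, which gives $d$ monomial partial derivatives of $v^j(\cdot;0)$ whose values are linearly independent in $\Cd$ at suitable formal test arguments; equivalently, a $d\times d$ minor of the Jacobian of $v^j(\cdot;0)$ is a nonzero element of $\fps{S^{(j)}}$, which is exactly what generic full rank $d$ means in the formal category. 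The principal obstacle is the inductive Faà di Bruno bookkeeping that matches the alternating substitution pattern in the Segre recursion with the structure of iterated brackets, exactly as in \cite{BER6}; because every intermediate expression is polynomial in finitely many Taylor coefficients of $Q$, the convergent argument transfers verbatim.
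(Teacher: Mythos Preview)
The paper does not prove this statement at all: Theorem~\ref{thm:fintypecritformal} is simply quoted from Baouendi, Ebenfelt and Rothschild \cite{BER6} as a known result, with no argument given. So there is no ``paper's own proof'' to compare against.

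Your proposal is a reasonable high-level sketch of the BER argument itself, and the main idea---that the lowest-order Taylor coefficients of the iterated Segre maps at the origin coincide with the transverse components of iterated commutators of the CR vector fields, and that this computation is purely formal---is correct and is indeed why the criterion extends to the formal category. If you were asked to actually supply a proof here, the one place where your sketch is thin is the assertion that ``every iterated bracket of the relevant length arises in this way'': the correspondence between the alternating substitution pattern in $v^j$ and arbitrary bracket words is not a bijection, and one has to check that the span of the multilinear coefficients of $v^j$ genuinely exhausts the transverse part of the $j$-th step of the derived filtration (in BER this is done by an explicit induction, not just Fa\`a di Bruno bookkeeping). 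But for the purposes of this paper, citing \cite{BER6} is exactly what the authors do, and nothing more is required.
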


Thus, if we replace  $w$ by $ v^{j-1} (S^{(j)}_1;w)$ in \eqref{e:reflect2}, we obtain
\[ N\left(v^j(S^{(j)};w)\right) = N\left(v^1 (S^{(1)};v^{j-1} (S^{(j)}_1 ;w))\right) = 
a (S^{(1)};v^{j-1} (S^{(j)}_1 ;w)) N\left(v^{j-1} (S^{(j)}_1 ;w)\right).\]
Applying induction, we see that the following holds:

\begin{lemma}\label{lem:prolongation}
	For every $j\geq 1$, there exists a unit $a_j (S^{(j)},w)$ such that
	\begin{equation}
		\label{e:reflprol} N \left(v^j(S^{(j)};w)\right) = a_j (S^{(j)},w) N (w), \quad 
		 D \left(v^j(S^{(j)};w)\right) = a_j (S^{(j)},w) D (w).
	\end{equation}
\end{lemma}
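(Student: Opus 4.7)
The plan is a straightforward induction on $j$, forced by the definition of $v^j$. The base case $j=1$ is exactly \eqref{e:reflect2}, so I would take $a_1 = a$, which is a unit since $a(0,0,0,0) = 1$ by Proposition~\ref{pro:reflection}.

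For the inductive step, assume $a_{j-1}(S^{(j-1)}, w)$ has been produced. Starting from the definition
\[ v^j(S^{(j)};w) = v^1(z,\chi,z^1; v^{j-1}(S^{(j)}_1;w)), \]
I would substitute $w \mapsto v^{j-1}(S^{(j)}_1;w)$ into \eqref{e:reflect2}. This yields
\[ N\!\left(v^j(S^{(j)};w)\right) = a\!\left(S^{(1)}, v^{j-1}(S^{(j)}_1;w)\right) \cdot N\!\left(v^{j-1}(S^{(j)}_1;w)\right), \]
and applying the inductive hypothesis to the right-hand factor converts this to
\[ a\!\left(S^{(1)}, v^{j-1}(S^{(j)}_1;w)\right) \cdot a_{j-1}(S^{(j)}_1, w) \cdot N(w). \]
The same manipulation goes through verbatim with $D$ in place of $N$ (the multiplicative factors are identical because the base-case factor $a$ is the same in both lines of \eqref{e:reflect2}). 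So the natural definition is
\[ a_j(S^{(j)}, w) := a\!\left(S^{(1)}, v^{j-1}(S^{(j)}_1;w)\right) \cdot a_{j-1}(S^{(j)}_1, w). \]

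The only technicality to verify is that this substitution is legitimate in the formal power series ring and produces a unit. The normality conditions $Q(z,0,\tau) = Q(0,\chi,\tau) = \tau$ give $v^1(0,0,0;0) = 0$, and an easy induction on $j$ then shows $v^{j-1}(0;0) = 0$. Hence $v^{j-1}(S^{(j)}_1;w)$ has zero constant term, so substituting it into the last slot of $a$ is a well-defined formal operation, and the constant term of $a_j$ equals $a(0,0,0,0) \cdot a_{j-1}(0,0) = 1$. Thus $a_j$ is a unit, completing the induction. There is no real obstacle here; the entire argument is essentially a bookkeeping exercise that follows from the recursive structure of $v^j$ together with the reflection identity \eqref{e:reflect2}.
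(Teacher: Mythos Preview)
Your proof is correct and is exactly the argument the paper gives: the paper's proof consists of the single displayed substitution of $w\mapsto v^{j-1}(S^{(j)}_1;w)$ into \eqref{e:reflect2} followed by the words ``Applying induction,'' and you have simply written this out in full, including the verification that $a_j$ is a unit, which the paper leaves implicit.
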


We can now prove Theorem~\ref{thm:main}: By Theorem~\ref{thm:fintypecritformal}, there exists a 
$j$ such that $v^j(S^{(j)};0)$ is of generic full rank. Assuming that $D(0) = 0$, we see that 
$D(v^j(S^{(j)};0)) =0$. Since $v^j$ is of generic full rank, this implies that $D(w) = 0$; this 
contradiction shows that $D(0) \neq 0$. Hence, we can assume that $H(z,w) = N(w)$ is formal holomorphic, 
and without loss of generality, $N(0) = 0$. The same argument shows that $N(w) = 0$, and 
so, $H$ is constant.

\begin{remark}
	More generally, if we do not assume that $(M,0)$ is of 
	finite type, then we can define the formal variety 
	\[ V_j = \overline{\image( v^{(j)}(S^{(j)};0))}  \cong 
	\{ f \in \fps{w}  \colon f\circ v^{(j)} (S^{(j)};0)  = 0 \}, \]
	and $V=\cup_j V_j$ (which is again a formal variety). The 
	same arguments as above show that 
	$D$, as well as $N$, are constant on $V$. This corresponds to the statement that 
	a real-valued CR meromorphic function is constant along the CR-orbits of $M$. 
\end{remark}

\bibliographystyle{plain}
\bibliography{bibliography}
\end{document}